\def\R{\mathbb{R}}
\theoremstyle{plain}
\newtheorem{thm}{Theorem}
\newtheorem{lem}[thm]{Lemma}
\theoremstyle{remark}
\theoremstyle{definition}
\begin{document}

\date{June 2017}

\title{Multiples of long period small element continued fractions to short period large elements continued fractions}

\author{Michael O. Oyengo}
\address{Department of Mathematics \\
University of Illinois\\
1409 W. Green Street \\
Urbana, IL 61801}
\email{mchlyng2@illinois.edu}

\subjclass[2010]{11C20, 26C05, 30C15}
\keywords{Chebyshev polynomials; Continued fractions; Fibonacci numbers; Fibonacci-like numbers; Lucas numbers; Lucas-like numbers;  Fibonacci polynomials}

\begin{abstract}
We construct a class of quadratic irrationals having continued fractions of period $n\geq2$ with ``small'' partial quotients for which certain integer multiples have continued fractions of period $1$, $2$ or $4$ with ``large'' partial quotients. We then show that numbers in the period of the new continued fraction are simple functions of the numbers in the periods of the original continued fraction. We give generalizations of some of the continued fractions and show that polynomials arising from the generalizations are related to Chebyshev and Fibonacci polynomials. We then show that some of these polynomials have a hyperbolic root distribution.
%We study continued fractions of quadratic irrationals related to Fibonacci type numbers. In particular, we show that integer multiples these continued fractions can be described by a simple continued fraction. We also show how polynomials arising from generalizations of these continued fractions are related to Chebyshev and Fibonacci polynomials, and in some cases have hyperbolic root distribution.
\end{abstract}

\maketitle

%%%%%%%%%%%%%%%%%%%%%%%%%%%%%%%%%%%%%%%%%%%%%%%%%%%%%%%%%%%%%%%%%%%%%%%%%
% Macros
%%%%%%%%%%%%%%%%%%%%%%%%%%%%%%%%%%%%%%%%%%%%%%%%%%%%%%%%%%%%%%%%%%%%%%%%%

\newcommand\sfrac[2]{{#1/#2}}

\newcommand\cont{\operatorname{cont}}
\newcommand\diff{\operatorname{diff}}

%%%%%%%%%%%%%%%%%%%%%%%%%%%%%%%%%%%%%%%%%%%%%%%%%%%%%%%%%%%%%%%%%%%%%%%%%

\section{Introduction}
Let $[a_{0}, a_{1}, a_{2},\dots]$ be a simple continued fraction expansion of a real number $\alpha$ where $a_{j}$'s are positive integers. If $\alpha$ is a quadratic irrational, then by a theorem of Lagrange (see for example \cite[p. 44]{Bor}) its continued fraction will be periodic, i.e.,
$$\alpha=[a_{0}, a_{1},\dots,a_{k},\overline{b_{1},b_{2},\dots,b_{n}}],$$
where $b_{1},b_{2},\dots,b_{n}$ is the period of the expansion of $\alpha$, and $a_{0}, a_{1},\dots,a_{k}$ is the non-periodic part. We say that $\alpha$ is of period $k$ if its periodic part has length $k$.

If $\alpha$ is a quadratic irrational, then clearly for any positive integer $N$, $N\alpha$ is still a quadratic irrational, and its continued fraction will be periodic. In \cite{Cus}, Cusick  presents an algorithm for obtaining the continued fraction of $N\alpha$ and uses it to give estimates for the length of the period of the expansion of $N\alpha$ in relation to that of $\alpha$. In this paper, we are going to construct a quadratic irrational $\alpha$ such that when $N$ is a Fibonacci or Lucas number, the continued fraction of $N\alpha$ has period of length $1$, $2$ or $4$. As we will see, the length of the period of the new continued fraction depends on the parity of $n$, the length of the period of the original continued fraction.

Fibonacci numbers and Lucas numbers are integers that solve the the recurrence relation
$$f_{n+1}=f_{n}+f_{n-1}$$
under the initial conditions $F_{0}=0$, $F_{1}=F_{2}=1$, $L_{0}=2$ and $L_{1}=1$. Fibonacci numbers can also be defined as a particular case $F_{n}(1)$, of Fibonacci polynomials which are generated by the rational function
\begin{equation}\label{fibgen}
  \frac{t}{1-xt-t^{2}}=\sum_{k=1}^{\infty}F_{k}(x)t^{k}.
\end{equation}
In section \ref{sec2}, we construct Fibonacci-like numbers $\tilde{F}_{n}(m)$ and use them to construct $\alpha_{n}$, the largest root of the quadratic $$x^{2}-2NF_{n}x-F_{n} \tilde{F}_{n}(2N)$$ where $N$ is a positive integer. We then show the relationship between
the continued fraction of $\alpha_{n}$ and $$F_{n}\cdot[\overline{2N,1^{(n-1)}}]$$ where $1^{(n)}=1,1,1,\dots,1$ $n$ times.

In section \ref{sec3}, we construct Lucas-like numbers $\tilde{L}_{n}(m)$ and use them to construct $\beta_{n}$, the largest root of the quadratic $$y^{2}-2NL_{n}y-L_{n} \tilde{L}_{n}(2N)$$ where $N=5k+3$ is a positive integer. We then show the relationship between
the continued fraction of $\beta_{n}$ and $$L_{n}\cdot[\overline{2N,1^{(n-2)},2,1,2k,1,2,1^{(n-2)}}].$$

In section \ref{sec4}, we generalize the continued fractions, as well as results of section \ref{sec2}. Polynomials arising from convergents of the generalized continued fractions are studied in section  \ref{sec5}. In particular, we show how these polynomials relate to Fibonacci and Chebyshev polynomials, and show that some of the polynomials have their roots on hyperbolas.

%%%%%%%%%%%%%%%%%%%%%%%%%%%%%%%%%%%%%%%%%%%%%%%%%%%%%%%%%%%%%%%%%%%%%%%%%%%%%%%%%%%%%%%%%%%%%%%%%%%%%%%%%%%%%%%%%%%%%%%%%%%%%%%%%%%%%%%%%%%%%%%%%%%%%%%%%%%%

\section{Fibonacci-like numbers and quadratic irrationals}\label{sec2}

Let $m$ be a positive integer and define the sequence $\tilde{F}_{n}(m)$ by the recurrence relation
\begin{equation}\label{fib1}
  \tilde{F}_{n}(m)= \tilde{F}_{n-1}(m)+ \tilde{F}_{n-2}(m)
\end{equation}
under the initial conditions $\tilde{F}_{0}(m)=0$,  $\tilde{F}_{1}(m)=1$ and $\tilde{F}_{2}(m)=m$. It is easy to see that they are generated by the rational function
\begin{equation}\label{fib2}
  \frac{t+(m-1)t^{2}}{1-t-t^{2}}=\sum_{n=0}^{\infty}\tilde{F}_{n}(m)t^{n}.
\end{equation}
Clearly, when $m=1$ we get the usual Fibonacci sequence. From the generating function (\ref{fib2}), and the generating function of Fibonacci numbers, we get the relation
\begin{eqnarray}
 \nonumber \tilde{F}_{n+1}(m) &=& F_{n+1}+(m-1)F_{n} \\ \label{fib2a}
  &=& mF_{n}+F_{n-1}.
\end{eqnarray}
For a fixed positive integer $N$, consider the quadratic
\begin{equation}\label{fib3}
  x^{2}-2NF_{n}x-F_{n}\tilde{F}_{n}(2N).
\end{equation}
The roots of the quadratic are given by
\begin{eqnarray}\label{fib4}
  \nonumber\alpha_{n}(N) &=& NF_{n}+\sqrt{N^{2}F_{n}^{2}+F_{n}\tilde{F}_{n}(2N)},\\
  \overline{\alpha}_{n}(N) &=& NF_{n}-\sqrt{N^{2}F_{n}^{2}+F_{n}\tilde{F}_{n}(2N)}.
\end{eqnarray}
We study the continued fraction of these quadratic irrationals.

A quadratic irrational $\alpha$ is said to be reduced if $\alpha>1$ and $-1/\overline{\alpha}>1$. If $\alpha$ is reduced, then its continued fraction is purely periodic (see \cite{Bor}  Theorem 2.48). First we show that for $n$ odd, $1/(\alpha_{n}(N)-\tilde{F}_{n+1}(2N))$ is reduced.

Let $B_{n}(N)=N^{2}F_{n}^{2}+F_{n}\tilde{F}_{n}(2N)$,
\begin{eqnarray*}
% \nonumber to remove numbering (before each equation)
  1/(\alpha_{n}(N)-\tilde{F}_{n+1}(2N)) &=& -1/(NF_{n}+F_{n-1}-\sqrt{B_{n}(N)}) \\
  &=&  \frac{-(NF_{n}+F_{n-1}+\sqrt{B_{n}(N)})}{F_{n-1}^{2}-F_{n}F_{n-2}} \\
   &=& NF_{n}+F_{n-1}+\sqrt{B_{n}(N)} >1.
\end{eqnarray*}
On the other hand,
$$\tilde{F}_{n+1}(2N)-\overline{\alpha}_{n}(N) = NF_{n}+F_{n-1}+\sqrt{B_{n}(N)} >1.$$
We have used (\ref{fib2a}) and the well known identity for Fibonacci numbers
\begin{equation}\label{fid}
  F_{n-1}^{2}-F_{n}F_{n-2}=(-1)^{n}.
\end{equation}
% $$ which is easy to prove by induction on $n$ and by the recurrence relation for Fibonacci numbers.
Let $x_{n}(N) = NF_{n}+F_{n-1}+\sqrt{B_{n}(N)}$. By using (\ref{fib2a}) and the Euclidean algorithm we get
\begin{eqnarray*}
% \nonumber to remove numbering (before each equation)
  x_{n}(N) &=& 2\tilde{F}_{n+1}(N)-(NF_{n}+F_{n-1}-\sqrt{B_{n}(N)}) \\
   &=& 2\tilde{F}_{n+1}(N)+\frac{1}{x_{n}(N)}.
\end{eqnarray*}
We now have
\begin{eqnarray*}
% \nonumber to remove numbering (before each equation)
  \alpha_{n}(N) &=& x_{n}-F_{n-1}\\
   &=&\tilde{F}_{n+1}(2N)+\frac{1}{x_{n}(N)} \\
   &=& [\tilde{F}_{n+1}(2N),\;\overline{2\tilde{F}_{n+1}(N)}].
\end{eqnarray*}

For $n$ even, the calculations for the continued fraction expansion of $\alpha_{n}$ are a bit involving due to the identity (\ref{fid}).
\begin{eqnarray*}
% \nonumber to remove numbering (before each equation)
  1/(\alpha_{n}(N)-\tilde{F}_{n+1}(2N)+1) &=& \frac{1}{1-(NF_{n}+F_{n-1}-\sqrt{B_{n}(N)})} \\
  &=&  \frac{NF_{n}+F_{n-1}+\sqrt{B_{n}(N)}}{NF_{n}+F_{n-1}+\sqrt{B_{n}(N)}-(F_{n-1}^{2}-F_{n}F_{n-2})} \\
   &=& \frac{NF_{n}+F_{n-1}+\sqrt{B_{n}(N)}}{NF_{n}+F_{n-1}+\sqrt{B_{n}(N)}-1} >1.
\end{eqnarray*}
We also have that
$$\tilde{F}_{n+1}(2N)-1-\overline{\alpha_{n}} = NF_{n}+F_{n-1}+\sqrt{B_{n}(N)} -1>1.$$
Now let $x_{n}(N) = 1/(1-(NF_{n}+F_{n-1}-\sqrt{B_{n}(N)}))$,
\begin{eqnarray*}
% \nonumber to remove numbering (before each equation)
 % x_{n} &=& 1+\frac{NF_{n}+F_{n-1}-\sqrt{B_{n}(N)}}{1-(NF_{n}+F_{n-1}-\sqrt{B_{n}(N)})} \\
    %&=& 1+\frac{F_{n-1}^{2}-F_{n}F_{n-2}}{NF_{n}+F_{n-1}+\sqrt{B_{n}(N)}-(F_{n-1}^{2}-F_{n}F_{n-2})}   \\
  x_{n}(N)  &=& 1+\frac{1}{NF_{n}+F_{n-1}+\sqrt{B_{n}(N)}-1}   \\
    &=&  1+\frac{1}{2\tilde{F}_{n+1}(N)-2+(1-(NF_{n}+F_{n-1}-\sqrt{B_{n}(N)}))}  \\
    &=&  1+\frac{1}{2\tilde{F}_{n+1}(N)-2+1/x_{n}(N)}.
\end{eqnarray*}
This implies that for $n$ even,
\begin{eqnarray*}
% \nonumber to remove numbering (before each equation)
  \alpha_{n}(N) &=& \tilde{F}_{n+1}(2N)-1+\frac{1}{x_{n}(N)} \\
   &=& [\tilde{F}_{n+1}(2N)-1,\;\overline{1,2\tilde{F}_{n+1}(N)-2}].
\end{eqnarray*}
In each case, the length of the period in the continued fraction expansion of $\alpha_{n}(N)$ is independent of $n$.  We have proved

\begin{thm}\label{alpha}
Let $\alpha_{n}(N)$ be the largest root of the quadratic (\ref{fib3}) and $N>1$ be a positive integer, then the continued fraction of $\alpha_{n}(N)$ for $n$ odd is given by
\begin{equation}\label{fib5}
  \alpha_{n}(N)=[\tilde{F}_{n+1}(2N),\overline{2\tilde{F}_{n+1}(N)}]
\end{equation}
while for $n$ even it is given by
\begin{equation}\label{fib6}
  \alpha_{n}(N)=[\tilde{F}_{n+1}(2N)-1,\overline{1,2\tilde{F}_{n+1}(N)-2}].
\end{equation}
\end{thm}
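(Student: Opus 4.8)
The plan is to compute the expansion directly from the explicit root $\alpha_{n}(N)=NF_{n}+\sqrt{B_{n}(N)}$ rather than by running the continued fraction algorithm blindly, using the identity (\ref{fib2a}) to rewrite the candidate first partial quotient as $\tilde{F}_{n+1}(2N)=2NF_{n}+F_{n-1}$. The whole argument hinges on one computation: rationalizing $1/(\sqrt{B_{n}(N)}-(NF_{n}+F_{n-1}))$ produces the denominator $B_{n}(N)-(NF_{n}+F_{n-1})^{2}$, and expanding $B_{n}(N)=N^{2}F_{n}^{2}+F_{n}\tilde{F}_{n}(2N)$ with $\tilde{F}_{n}(2N)=2NF_{n-1}+F_{n-2}$ collapses this denominator to $F_{n}F_{n-2}-F_{n-1}^{2}=(-1)^{n+1}$ by the Fibonacci identity (\ref{fid}). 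This sign is exactly why the two parities must be treated separately.

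First I would handle $n$ odd, where the denominator is $+1$. Setting $x_{n}(N)=NF_{n}+F_{n-1}+\sqrt{B_{n}(N)}$, I would check that it is reduced: $x_{n}(N)>1$ is immediate, and the conjugate condition follows from $\tilde{F}_{n+1}(2N)-\overline{\alpha}_{n}(N)=NF_{n}+F_{n-1}+\sqrt{B_{n}(N)}>1$. By Theorem 2.48 of \cite{Bor} its expansion is purely periodic. The Euclidean step then gives the self-referential identity $x_{n}(N)=2\tilde{F}_{n+1}(N)+1/x_{n}(N)$ --- this is the heart of the odd case --- so that $x_{n}(N)=[\overline{2\tilde{F}_{n+1}(N)}]$; peeling off the integer part of $\alpha_{n}(N)$ via $\alpha_{n}(N)=\tilde{F}_{n+1}(2N)+1/x_{n}(N)$ yields (\ref{fib5}).

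For $n$ even the denominator is $-1$, so the naive subtraction of $\tilde{F}_{n+1}(2N)$ no longer lands on a reduced surd, and the expansion acquires an extra partial quotient. I would instead shift by $\tilde{F}_{n+1}(2N)-1$ and work with $x_{n}(N)=1/(1-(NF_{n}+F_{n-1}-\sqrt{B_{n}(N)}))$, again verifying reducedness through the conjugate bound $\tilde{F}_{n+1}(2N)-1-\overline{\alpha}_{n}(N)>1$. The corresponding Euclidean identity, $x_{n}(N)=1+1/(2\tilde{F}_{n+1}(N)-2+1/x_{n}(N))$, now exposes a period of length two, giving (\ref{fib6}). I expect this even case to be the main obstacle: the sign flip from (\ref{fid}) forces the off-by-one adjustment and the two-term period, and one must use the hypothesis $N>1$ to guarantee that $2\tilde{F}_{n+1}(N)-2$ is a legitimate (positive) partial quotient. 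Finally I would remark that in both cases the period length is independent of $n$, as the statement asserts.
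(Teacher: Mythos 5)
Your proposal follows essentially the same route as the paper's own proof: the same rationalization collapsing the denominator to $\pm1$ via the identity $F_{n-1}^{2}-F_{n}F_{n-2}=(-1)^{n}$, the same reduced surds $x_{n}(N)$ for each parity, and the same Euclidean self-referential identities yielding the period-$1$ and period-$2$ expansions. The argument is correct; your added remark about needing $N>1$ to keep $2\tilde{F}_{n+1}(N)-2$ positive is a small point the paper leaves implicit.
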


%What can be said of the continued fraction of

We now show the relationship between the continued fraction of $\alpha_{n}(N)$ and $F_{n}\cdot[\overline{2N,1^{(n-1)}}]$ where $1^{(n)}=1,1,\dots,1$ $n$ times. Let $\alpha$ have the simple continued fraction
\begin{equation}\label{con1}
  \alpha=[c_{0},c_{1},c_{2},\dots,c_{k-1},c_{k}] = \frac{p_{k}}{q_{k}}
\end{equation}
%It follows that
%$$\alpha = \frac{\alpha p_{k}+p_{k-1}}{\alpha q_{k}+q_{k-1}}$$
where $p_{0}=a_{0}=\lfloor\alpha\rfloor,\;p_{-1}=1,\;q_{0}=1$ and $q_{-1}=0$. We will use the correspondence (see \cite[Lemma 2.8]{Bor})
\begin{equation}\label{con2}
  \left(\begin{array}{cc}c_{0} & 1 \\ 1 & 0 \\ \end{array}\right)\left(\begin{array}{cc}c_{1} & 1 \\ 1 & 0 \\ \end{array}\right)\ldots \left(\begin{array}{cc}c_{k-1} & 1 \\ 1 & 0 \\ \end{array}\right)\left(\begin{array}{cc}c_{k} & 1 \\ 1 & 0 \\ \end{array}\right) = \left(\begin{array}{cc}p_{k} & p_{k-1} \\ q_{k} & q_{k-1} \\ \end{array}\right).
\end{equation}
Taking the determinant on both sides of (\ref{con2}) gives the identity
\begin{equation}\label{con3}
  p_{k}q_{k-1}-p_{k-1}q_{k}=(-1)^{k+1}.
\end{equation}
Also, taking the transpose of the matrices on both sides of  (\ref{con2}) we get
\begin{equation}\label{con4}
  \left(\begin{array}{cc}c_{k} & 1 \\ 1 & 0 \\ \end{array}\right)\left(\begin{array}{cc}c_{k-1} & 1 \\ 1 & 0 \\ \end{array}\right)\ldots \left(\begin{array}{cc}c_{1} & 1 \\ 1 & 0 \\ \end{array}\right)\left(\begin{array}{cc}c_{0} & 1 \\ 1 & 0 \\ \end{array}\right) = \left(\begin{array}{cc}p_{k} & q_{k} \\ p_{k-1} & q_{k-1} \\ \end{array}\right)
\end{equation}
from which we deduce
\begin{eqnarray}
% \nonumber to remove numbering (before each equation)
 \nonumber [c_{k},c_{k-1},\ldots,c_{1},c_{0}] &=& \frac{p_{k}}{p_{k-1}} \\
  \label{con5} [c_{k},c_{k-1},\ldots,c_{1}] &=& \frac{q_{k}}{q_{k-1}}
\end{eqnarray}

\begin{lem}\label{lem2}
For all $n\geq2$, we have the continued fraction,
\begin{equation}\label{ratio1}
  \frac{\tilde{F}_{n+1}(m)}{\tilde{F}_{n}(m)}=[1^{(n-1)},m]
\end{equation}
and for $n\geq3$ we have the continued fraction
\begin{equation}\label{ratio1}
  \frac{\tilde{F}_{n+2}(m)}{\tilde{F}_{n}(m)}=[2,1^{(n-2)},m]
\end{equation}
where $1^{(n)}=1,1,\dots,1$ $n$ times.
\end{lem}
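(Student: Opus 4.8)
The plan is to evaluate both continued fractions directly through the matrix correspondence (\ref{con2}), reading off the value as the ratio of the two entries in the first column of the associated matrix product. The one fact I need beyond (\ref{con2}) is the classical closed form for powers of the ``all-ones'' matrix, namely
$$\begin{pmatrix} 1 & 1 \\ 1 & 0 \end{pmatrix}^{k}=\begin{pmatrix} F_{k+1} & F_{k} \\ F_{k} & F_{k-1} \end{pmatrix},\qquad k\geq1,$$
which follows by a one-line induction from the Fibonacci recurrence, together with the identity (\ref{fib2a}), $\tilde F_{n+1}(m)=mF_{n}+F_{n-1}$, which lets me recognize the resulting entries as Fibonacci-like numbers.

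For the first identity, the continued fraction $[1^{(n-1)},m]$ corresponds under (\ref{con2}) to the product $\left(\begin{smallmatrix} 1 & 1 \\ 1 & 0 \end{smallmatrix}\right)^{n-1}\left(\begin{smallmatrix} m & 1 \\ 1 & 0 \end{smallmatrix}\right)$. Substituting the power formula with $k=n-1$ and multiplying out, the first column has top entry $mF_{n}+F_{n-1}$ and bottom entry $mF_{n-1}+F_{n-2}$, so by (\ref{con2}) the value of the continued fraction is $(mF_{n}+F_{n-1})/(mF_{n-1}+F_{n-2})$. Two applications of (\ref{fib2a}) identify the numerator as $\tilde F_{n+1}(m)$ and the denominator as $\tilde F_{n}(m)$, which is exactly the first asserted identity; here the hypothesis $n\geq2$ is precisely what makes the exponent $n-1\geq1$ so that the power formula applies.

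For the second identity I treat $[2,1^{(n-2)},m]$ the same way, via the product $\left(\begin{smallmatrix} 2 & 1 \\ 1 & 0 \end{smallmatrix}\right)\left(\begin{smallmatrix} 1 & 1 \\ 1 & 0 \end{smallmatrix}\right)^{n-2}\left(\begin{smallmatrix} m & 1 \\ 1 & 0 \end{smallmatrix}\right)$, which requires $n\geq3$ so that $n-2\geq1$. Inserting the power formula for the middle factor and absorbing the leading $\left(\begin{smallmatrix} 2 & 1 \\ 1 & 0 \end{smallmatrix}\right)$ using the one-step identity $2F_{j}+F_{j-1}=F_{j+2}$ collapses the left block to $\left(\begin{smallmatrix} F_{n+1} & F_{n} \\ F_{n-1} & F_{n-2} \end{smallmatrix}\right)$; multiplying by the final factor then yields a first column with top entry $mF_{n+1}+F_{n}$ and bottom entry $mF_{n-1}+F_{n-2}$, which (\ref{fib2a}) identifies as $\tilde F_{n+2}(m)$ and $\tilde F_{n}(m)$ respectively. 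Conceptually, the leading partial quotient $2$ does nothing but advance the Fibonacci index in the numerator by one, turning $\tilde F_{n+1}$ into $\tilde F_{n+2}$. There is no genuine obstacle beyond keeping the index shifts straight; the only points to watch are applying (\ref{fib2a}) with the correct argument in each row and checking the edge cases $n=2$ and $n=3$, where the lowest-index Fibonacci numbers such as $F_{0}=0$ appear but cause no trouble.
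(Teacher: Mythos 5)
Your proof is correct and follows essentially the same route as the paper: both compute the matrix product for $[1^{(n-1)},m]$ (respectively $[2,1^{(n-2)},m]$) via the correspondence (\ref{con2}), use the closed form $\left(\begin{smallmatrix}1 & 1 \\ 1 & 0\end{smallmatrix}\right)^{k}=\left(\begin{smallmatrix}F_{k+1} & F_{k} \\ F_{k} & F_{k-1}\end{smallmatrix}\right)$, and identify the first-column entries as $\tilde F_{n+1}(m)$, $\tilde F_{n+2}(m)$ and $\tilde F_{n}(m)$ through the identity (\ref{fib2a}). The only cosmetic difference is that you absorb the leading $\left(\begin{smallmatrix}2 & 1 \\ 1 & 0\end{smallmatrix}\right)$ by the one-step identity $2F_{j}+F_{j-1}=F_{j+2}$ before the final multiplication, exactly as the paper does implicitly.
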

\begin{proof}
We can easily show by induction on $k$ that for all $k\geq 1$,
 $$\left(\begin{array}{cc}1 & 1 \\ 1 & 0 \\ \end{array}\right)^{k}=\left(\begin{array}{cc} F_{k+1} & F_{k} \\ F_{k} & F_{k-1} \\ \end{array}\right).$$
It follows that
$$\left(\begin{array}{cc}1 & 1 \\ 1 & 0 \\ \end{array}\right)^{k-1}\left(\begin{array}{cc}m & 1 \\ 1 & 0 \\ \end{array}\right)=\left(\begin{array}{cc} \tilde{F}_{k+1}(m) & _{k} \\ \tilde{F}_{k}(m) & F_{k-1} \\ \end{array}\right)$$
where we have used (\ref{fib2a}). It follows by (\ref{con1}), and the correspondence (\ref{con2}) that $[1^{(k-1)},m]=\frac{\tilde{F}_{k+1}(m)}{\tilde{F}_{k}(m)}.$

Similarly,
\begin{eqnarray*}
% \nonumber to remove numbering (before each equation)
 \left(\begin{array}{cc}2 & 1 \\ 1 & 0 \\ \end{array}\right) \left(\begin{array}{cc}1 & 1 \\ 1 & 0 \\ \end{array}\right)^{k-2}\left(\begin{array}{cc}m & 1 \\ 1 & 0 \\ \end{array}\right) &=& \left(\begin{array}{cc} F_{k+1} & F_{k} \\ F_{k-1} & F_{k-2} \\ \end{array}\right)\left(\begin{array}{cc}m & 1 \\ 1 & 0 \\ \end{array}\right) \\
    &=& \left(\begin{array}{cc} \tilde{F}_{k+2}(m) & F_{k+1} \\ \tilde{F}_{k}(m) & F_{k-1} \\ \end{array}\right)
\end{eqnarray*}
from which we deduce that $[2,1^{(k-2)},m]=\frac{\tilde{F}_{k+2}(m)}{\tilde{F}_{k}(m)}.$
%\begin{eqnarray*}
%% \nonumber to remove numbering (before each equation)
%   &=&  \\
%   &=& 
%\end{eqnarray*}
%From the initial conditions and recurrence relation (\ref{fib1}) for $F_{n}(m)$ we have $F_{2}(m)/F_{1}(m)=m$ and $F_{3}(m)/F_{2}(m)=1+1/(F_{2}(m)/F_{1}(m))=[1,m]$. If we assume the statement (\ref{ratio1}) to be true, then
%\begin{eqnarray*}
%% \nonumber to remove numbering (before each equation)
% \frac{F_{n+2}(m)}{\Tilde{F}_{n+1}(m)} &=& 1+ \frac{F_{n}(m)}{\Tilde{F}_{n+1}(m)}\\
%    &=& 1+\cfrac{1}{ \frac{\Tilde{F}_{n+1}(m)}{F_{n}(m)}} \\
%   &=& [1,1^{(n-1)},m]\\
%   &=& [1^{(n)},m]
%\end{eqnarray*}
We can observe that all the partial quotients of $[1^{(n)},m]$ are bounded, and for $j\leq n$, its sequence of convergents $\{p_{j}/q_{j}\}$ is the same as the sequence of convergents for the continued fraction of $F_{n+1}/F_{n}$, which converges to the golden ratio. By comparison, for any positive integer $m$, $\tilde{F}_{n+1}(m)/\tilde{F}_{n}(m)$ also converges to the golden ration.
\end{proof}
We now show another interesting property of the continued fraction of $\alpha_{n}(N)$.
%\begin{con}
%Let $\alpha_{k}$ be as defined in (\ref{fib4}) for $k \geq1$
%\end{con}

\begin{thm}\label{thm1}
For $n\geq1$, let $$\lambda_{n}(N):=F_{n}\cdot[\overline{2N,\;1^{(n-1)}}]$$ where the $1^{(n)}$ means that $1$ has been repeated $n$ times, and $F_{n}$ is the $n$th Fibonacci number. Then $\lambda_{n}(N)$ is an algebraic integer, and for $n$ odd
$$\lambda_{n}(N) = [\tilde{F}_{n+1}(2N),\overline{2\tilde{F}_{n+1}(N)}]$$ while for $n$ even
$$\lambda_{n}(N) = [\tilde{F}_{n+1}(2N)-1,\;\overline{1,\;2\tilde{F}_{n+1}(N)-2}].$$
%with $$\frac{\beta^{'}_{k+2}}{\beta^{'}_{k}}=[2,\;1^{(k-1)},\;N]$$ a finite continued fraction.
\end{thm}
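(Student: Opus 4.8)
The plan is to identify $\lambda_{n}(N)$ with the largest root $\alpha_{n}(N)$ of the quadratic (\ref{fib3}); once this is done, the two continued-fraction expansions are precisely (\ref{fib5}) and (\ref{fib6}) of Theorem \ref{alpha}, and the algebraic-integer claim follows at once because (\ref{fib3}) is monic with integer coefficients.

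First I would set $\gamma=[\overline{2N,\;1^{(n-1)}}]$. Since this continued fraction is purely periodic we have $\gamma>1$, and in particular $\gamma>0$. Using the matrix correspondence (\ref{con2}) together with the identity $\left(\begin{smallmatrix}1 & 1\\ 1 & 0\end{smallmatrix}\right)^{k}=\left(\begin{smallmatrix}F_{k+1} & F_{k}\\ F_{k} & F_{k-1}\end{smallmatrix}\right)$ established in the proof of Lemma \ref{lem2}, I would evaluate the matrix attached to the period block $[2N,1^{(n-1)}]$:
\begin{equation*}
\begin{pmatrix} 2N & 1 \\ 1 & 0 \end{pmatrix}\begin{pmatrix} F_{n} & F_{n-1} \\ F_{n-1} & F_{n-2} \end{pmatrix}=\begin{pmatrix} \tilde{F}_{n+1}(2N) & \tilde{F}_{n}(2N) \\ F_{n} & F_{n-1} \end{pmatrix},
\end{equation*}
where the two top entries are simplified through (\ref{fib2a}). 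Reading off the convergents gives $p_{n-1}=\tilde{F}_{n+1}(2N)$, $p_{n-2}=\tilde{F}_{n}(2N)$, $q_{n-1}=F_{n}$, and $q_{n-2}=F_{n-1}$.

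Next, the purely periodic structure exhibits $\gamma$ as the fixed point $\gamma=(p_{n-1}\gamma+p_{n-2})/(q_{n-1}\gamma+q_{n-2})$. Clearing denominators and using $F_{n-1}-\tilde{F}_{n+1}(2N)=-2NF_{n}$ (again from (\ref{fib2a})) collapses the relation to $F_{n}\gamma^{2}-2NF_{n}\gamma-\tilde{F}_{n}(2N)=0$. Multiplying by $F_{n}$ and substituting $\lambda_{n}(N)=F_{n}\gamma$ then yields exactly $\lambda_{n}(N)^{2}-2NF_{n}\lambda_{n}(N)-F_{n}\tilde{F}_{n}(2N)=0$, so $\lambda_{n}(N)$ is a root of (\ref{fib3}). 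Since $\gamma>0$ and $F_{n}>0$ force $\lambda_{n}(N)>0$, while the conjugate root $\overline{\alpha}_{n}(N)=NF_{n}-\sqrt{B_{n}(N)}<0$ (because $B_{n}(N)>N^{2}F_{n}^{2}$), the positive root must be $\alpha_{n}(N)$, whence $\lambda_{n}(N)=\alpha_{n}(N)$.

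Finally, invoking Theorem \ref{alpha} supplies the stated expansions (\ref{fib5}) for $n$ odd and (\ref{fib6}) for $n$ even, and monicity of (\ref{fib3}) delivers the algebraic-integer assertion. I do not expect a genuine obstacle here: the only care needed is the bookkeeping in the matrix product and the sign argument that singles out $\alpha_{n}(N)$ rather than $\overline{\alpha}_{n}(N)$ as the value of $F_{n}\gamma$. As a consistency check, the determinant gives $p_{n-1}q_{n-2}-p_{n-2}q_{n-1}=\tilde{F}_{n+1}(2N)F_{n-1}-\tilde{F}_{n}(2N)F_{n}=(-1)^{n}$, which matches (\ref{con3}) after reduction via the Fibonacci identity (\ref{fid}), confirming the convergent identification.
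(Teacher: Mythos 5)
Your proposal is correct and follows essentially the same route as the paper: both use the matrix correspondence (\ref{con2}) to compute the convergent matrix of the period block $2N,1^{(n-1)}$, derive the quadratic $F_{n}\gamma^{2}-2NF_{n}\gamma-\tilde{F}_{n}(2N)=0$, identify $F_{n}\gamma$ with the largest root $\alpha_{n}(N)$ of (\ref{fib3}), and then quote Theorem \ref{alpha}. Your added sign argument pinning down the positive root and the determinant consistency check are slightly more careful than the paper's ``clearly,'' but the substance is identical.
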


One way of approaching this question is by the use of Chatelet algorithm for integer multiples of a continued fraction described by Cusik in \cite{Cus} in order to get the continued fraction of $\lambda_{n}(N)$. This approach is tedious and very involving.  We will employ a rather straightforward approach by using the correspondence (\ref{con2}).

Let $x=[\overline{2N,1^{(n-1)}}]$ and recall
 $$x=[2N,1^{(n-1)},x]=\frac{xp_{n}+p_{n-1}}{xq_{n}+q_{n-1}}$$ where $[2N,1^{(n-1)}]=\frac{p_{n}}{q_{n}}$. By the correspondence (\ref{con2}),
\begin{eqnarray*}
% \nonumber to remove numbering (before each equation)
   \left(\begin{array}{cc}2N & 1 \\ 1 & 0 \\ \end{array}\right)\left(\begin{array}{cc}1 & 1 \\ 1 & 0 \\ \end{array}\right)^{n-1} &=& \left(\begin{array}{cc}2N & 1 \\ 1 & 0 \\ \end{array}\right)\left(\begin{array}{cc}F_{n} & F_{n-1} \\ F_{n-1} & F_{n-2} \\ \end{array}\right) \\
   &=& \left(\begin{array}{cc} F_{n+1}(2N) & F_{n}(2N) \\ F_{n} & F_{n-1} \\ \end{array}\right)
\end{eqnarray*}
we get $$[2N,1^{(n-1)}]=\frac{\tilde{F}_{n+1}(2N)}{F_{n}}$$ and
$$x=\frac{x\tilde{F}_{n+1}(2N)+\tilde{F}_{n}(2N)}{x\tilde{n}_{n}+F_{n-1}}.$$
Clearly $x$ is the largest root of the quadratic $$F_{n}x^{2}-2NF_{n}x-\tilde{F}_{n}(2N)$$ given by
$$x=N+\sqrt{N^{2}+\tilde{F}_{n}(2N)/F_{n}}.$$ $x$ is an algebraic integer and so is $\lambda_{n}(N)=F_{n}x.$ Also, $F_{n}x =\alpha_{n}(N)$ as defined in (\ref{fib4}) and whose continued fraction has been described in theorem \ref{alpha}. %The last part of  the theorem follows from observing $\beta^{'}_{k}=\tilde{F}_{k+1}(2N)+F_{k-1}=2F_{k+1}(N)$ and lemma \ref{lem2}.

%%%%%%%%%%%%%%%%%%%%%%%%%%%%%%%%%%%%%%%%%%%%%%%%%%%%%%%%%%%%%%%%%%%%%%%%%%%%%%%%%%%%%%%%%%%%%%%%%%%%%%%%%%%%%%%%%%%%%%%%%%%%%%%%%%%%%%%%%%%%%%%%%%%%%%%%%%%

\section{Lucas-like numbers and quadratic irrationals}\label{sec3}

Let $m$ be a positive integer and define the Lucas-like sequence $\tilde{L}_{n}(m)$  by
\begin{equation}\label{luc1}
  \tilde{L}_{n}(m)=mL_{n-1}+L_{n-2},
\end{equation}
where $\tilde{L}_{n}(1)$ is the usual Lucas sequence. The numbers $\tilde{L}_{n}(m)$ solves the recurrence relation
\begin{equation}\label{luc2}
  \tilde{L}_{n}(m)= \tilde{L}_{n-1}(m)+ \tilde{L}_{n-2}(m)
\end{equation}
with initial conditions $\tilde{L}_{0}(m)=3-m$,  $\tilde{L}_{1}(m)=-1+2m$ and $\tilde{L}_{2}(m)=2+m$.

For a fixed positive integer $N$, consider the quadratic
\begin{equation}\label{luc3}
  x^{2}-2NL_{n}x-L_{n}\tilde{L}_{n}(2N).
\end{equation}
The roots of the quadratic are given by
\begin{eqnarray}\label{luc4}
  \nonumber\beta_{n}(N) &=& NL_{n}+\sqrt{N^{2}L_{n}^{2}+L_{n}\tilde{L}_{n}(2N)},\\
  \overline{\beta}_{n}(N) &=& NL_{n}-\sqrt{N^{2}L_{n}^{2}+L_{n}\tilde{L}_{n}(2N)}.
\end{eqnarray}
Just like in the previous section, we are going to examine the continued fractions of these quadratic irrationals. We will use the well known identity
\begin{equation}\label{luc5}
  L_{n-1}^{2}-L_{n}L_{n-2}=(-1)^{n-1}5,
\end{equation}
as well as identities that relate the Fibonacci sequence to the Lucas sequence
\begin{equation}\label{id1}
  L_{n}=F_{n+1}+F_{n-1}
\end{equation}
and
\begin{equation}\label{id2}
  L_{n}+2L_{n-1}=5F_{n}.
\end{equation}
%######################################################################################################################################################
First we show that for $n$ even, $1/(\beta_{n}(N)-\tilde{L}_{n+1}(2N))$ is reduced.

Let $C_{n}(N)=N^{2}L_{n}^{2}+L_{n}\tilde{L}_{n}(2N)$,
\begin{eqnarray*}
% \nonumber to remove numbering (before each equation)
  1/(\beta_{n}(N)-\tilde{L}_{n+1}(2N)) &=& -1/\left(NL_{n}+L_{n-1}-\sqrt{C_{n}(N)}\right) \\
  &=&  \frac{-\left(NL_{n}+L_{n-1}+\sqrt{C_{n}(N)}\right)}{L_{n-1}^{2}-L_{n}L_{n-2}} \\
   &=& \frac{1}{5}\left(NL_{n}+L_{n-1}+\sqrt{C_{n}(N)}\right) >1.
\end{eqnarray*}
Where we have used (\ref{luc5}). On the other hand,
$$\tilde{L}_{n+1}(2N)-\overline{\beta_{n}(N)}=NL_{n}+L_{n-1}+\sqrt{C_{n}(N)} >1.$$
%We have used the identities (\ref{fib2a}) and $F_{n-1}^{2}-F_{n}F_{n-2}=(-1)^{n}$ which is easy to prove by induction on $n$ and by the recurrence relation for Fibonacci numbers.

Let $N=5k+3$ for an integer $k\geq0$, and $y_{n}(N) = 1/(\beta_{n}(N)-\tilde{L}_{n+1}(2N))$. Using (\ref{luc1}) and the Euclidean algorithm we get
%\begin{eqnarray*}
%% \nonumber to remove numbering (before each equation)
%  y_{n} &=& 2kL_{n}+2F_{n+1}+\frac{1}{5}\left(\sqrt{C_{n}(N)}-(5k+3)L_{n}-L_{n-1})\right) \\
%   &=& 2kL_{n}+2F_{n+1}+\frac{-(L_{n-1}^{2}-L_{n}L_{n-2})}{5((5k+3)L_{n}+L_{n-1}+\sqrt{C_{n}(N)})} \\
%   &=& 2kL_{n}+2F_{n+1}+\frac{1}{(5k+3)L_{n}+L_{n-1}+\sqrt{C_{n}(N)}}.
%\end{eqnarray*}
%We also have that
%\begin{eqnarray*}
%% \nonumber to remove numbering (before each equation)
%  (5k+3)L_{n}+L_{n-1}+\sqrt{C_{n}(N)} &=& 2\tilde{L}_{n+1}(2(5k+3))+\left(\sqrt{C_{n}(N)}-(5k+3)L_{n}-L_{n-1})\right) \\
%    &=&2\tilde{L}_{n+1}(2(5k+3))+\frac{-(L_{n-1}^{2}-L_{n}L_{n-2})}{(5k+3)L_{n}+L_{n-1}+\sqrt{C_{n}(N)}} \\
%    &=& 2\tilde{L}_{n+1}(2(5k+3))+\frac{1}{\frac{1}{5}\left((5k+3)L_{n}+L_{n-1}+\sqrt{C_{n}(N)}\right)} \\
%    &=& 2\tilde{L}_{n+1}(2(5k+3))+\frac{1}{y_{n}}.
%\end{eqnarray*}
\begin{eqnarray*}
% \nonumber to remove numbering (before each equation)
  y_{n}(N) &=& 2kL_{n}+2F_{n+1}+\frac{1}{5}\left(\sqrt{C_{n}(N)}-NL_{n}-L_{n-1})\right) \\
   &=& 2kL_{n}+2F_{n+1}+\frac{-(L_{n-1}^{2}-L_{n}L_{n-2})}{5\left(NL_{n}+L_{n-1}+\sqrt{C_{n}(N)}\right)} \\
   &=& 2kL_{n}+2F_{n+1}+\frac{1}{NL_{n}+L_{n-1}+\sqrt{C_{n}(N)}}.
\end{eqnarray*}
We also have that
\begin{eqnarray*}
% \nonumber to remove numbering (before each equation)
  NL_{n}+L_{n-1}+\sqrt{C_{n}(N)} &=& 2\tilde{L}_{n+1}(2N)+\left(\sqrt{C_{n}(N)}-NL_{n}-L_{n-1})\right) \\
    &=& 2\tilde{L}_{n+1}(2N)+\frac{-(L_{n-1}^{2}-L_{n}L_{n-2})}{NL_{n}+L_{n-1}+\sqrt{C_{n}(N)}} \\
 % &=& 2\tilde{L}_{n+1}(2N)+\frac{1}{\frac{1}{5}\left(NL_{n}+L_{n-1}+\sqrt{C_{n}(N)}\right)} \\
    &=& 2\tilde{L}_{n+1}(N)+\frac{1}{y_{n}(N)}.
\end{eqnarray*}
We now have that for $n$ even,
\begin{eqnarray*}
% \nonumber to remove numbering (before each equation)
  \beta_{n}(N) &=& 5y_{n}(N)-L_{n-1}\\
   &=&\tilde{L}_{n+1}(2N)+\cfrac{1}{2kL_{n}+2F_{n+1}+\cfrac{1}{2\tilde{L}_{n+1}(N)+\cfrac{1}{y_{n}(N)}}} \\
   &=& [\tilde{L}_{n+1}(2N),\;\overline{2kL_{n}+2F_{n+1},\;2\tilde{L}_{n+1}(N) }\;].
\end{eqnarray*}

For $n$ odd, the calculations for the continued fraction expansion of $\beta_{n}$ are a bit delicate due to identity (\ref{luc5}).
\begin{eqnarray*}
% \nonumber to remove numbering (before each equation)
  1/(\beta_{n}(N)-\tilde{L}_{n+1}(2N)+1) &=& \frac{1}{1-\left(NL_{n}+L_{n-1}-\sqrt{C_{n}(N)}\right)} \\
  &=&  \frac{NL_{n}+L_{n-1}+\sqrt{C_{n}(N)}}{NL_{n}+L_{n-1}+\sqrt{C_{n}(N)}-(L_{n-1}^{2}-L_{n}L_{n-2})} \\
   &=& \frac{NL_{n}+L_{n-1}+\sqrt{C_{n}(N)}}{NL_{n}+L_{n-1}+\sqrt{C_{n}(N)}-5} >1
\end{eqnarray*}
for $n>1$. We also have that
$$\tilde{L}_{n+1}(2N)-1-\overline{\beta_{n}}=NL_{n}+L_{n-1}+\sqrt{C_{n}(N)} -1>1.$$
It can easily be verified that for all $n\geq 1$, $1<y_{n}(N)<2$. Let $N=5k+3$ where $k\geq0$ is an integer, and
$$y_{n}(N) = 1/\left(1-\left(NL_{n}+L_{n-1}-\sqrt{C_{n}(N)}\right)\right).$$ Then,
\begin{eqnarray*}
% \nonumber to remove numbering (before each equation)
  y_{n}(N) &=& 1+\frac{1}{\frac{1}{5}\left(NL_{n}+L_{n-1}+\sqrt{C_{n}(N)}-5\right)} \\
    &=& 1+\frac{1}{2kL_{n}+2F_{n+1}-2+\frac{1}{5}\left(5-NL_{n}-L_{n-1}+\sqrt{C_{n}(N)}\right)}.% \\
%    &=& 1+\frac{1}{NL_{n}+L_{n-1}+\sqrt{C_{n}(N)}-1}   \\
%    &=&  1+\frac{1}{2\tilde{L}_{n+1}(N)-2+(1-(NL_{n}+L_{n-1}-\sqrt{L_{n}}))}  \\
%    &=&  1+\frac{1}{2\tilde{L}_{n+1}(N)-2+1/y_{n}}.
\end{eqnarray*}
It is easy to show (by induction on $n$) that $$0<\frac{1}{5}\left(5-NL_{n}-L_{n-1}+\sqrt{C_{n}(N)}\right)<1.$$
Now
\begin{eqnarray*}
% \nonumber to remove numbering (before each equation)
  \frac{1}{\frac{1}{5}\left(5-NL_{n}-L_{n-1}+\sqrt{C_{n}(N)}\right)} &=& 1+\frac{1}{NL_{n}+L_{n-1}+\sqrt{C_{n}(N)}-1} \\
    &=&  1+\cfrac{1}{2\tilde{L}_{n+1}(N)-2+\cfrac{1}{y_{n}(N)}}
\end{eqnarray*}
This implies that for $n$ odd,
\begin{eqnarray*}
% \nonumber to remove numbering (before each equation)
  \beta_{n}(N) &=& \tilde{L}_{n+1}(2N)-1+\frac{1}{y_{n}(N)} \\
   &=& [\tilde{L}_{n+1}(2N)-1,\;\overline{1,\;2kL_{n}+2F_{n+1}-2,\;1\;2\tilde{L}_{n+1}(N)-2}].
\end{eqnarray*}
We can also make the simplification
\begin{eqnarray*}
% \nonumber to remove numbering (before each equation)
  2kL_{n}+2F_{n+1} &=& 2kL_{n}+\frac{2}{5}(3L_{n}+L_{n-1}) \\
    &=& \frac{2}{5}((5k+3)L_{n}+L_{n-1})  \\
    &=&  \frac{2}{5}L_{n+1}(N)
\end{eqnarray*}
 in the two statements above.  We have proved

\begin{thm}\label{beta1}
Let $N=5k+3$ for $k\geq0$ and  $\beta_{n}(k)$ be the largest root of the quadratic (\ref{luc3}), then the continued fraction of $\beta_{n}(k)$ for $n$ odd is given by
\begin{equation}\label{luc6}
  \beta_{n}(N)=[\tilde{L}_{n+1}(2N)-1,\overline{1,\frac{2}{5}L_{n+1}(N)-2,1,2\tilde{L}_{n+1}(N)-2}]
\end{equation}
while for $n$ even it is given by
\begin{equation}\label{luc7}
  \beta_{n}(N)=[\tilde{L}_{n+1}(2N),\overline{\frac{2}{5}L_{n+1}(N),2\tilde{L}_{n+1}(N)}].
\end{equation}
\end{thm}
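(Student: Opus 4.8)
The plan is to follow the same template as Theorem~\ref{alpha}, exploiting the fact that a quadratic irrational has a purely periodic continued fraction exactly when it is reduced (see \cite{Bor} Theorem 2.48). Because the relevant identity (\ref{luc5}) produces a factor of $(-1)^{n-1}5$ rather than the $\pm1$ of (\ref{fid}), the parities play opposite roles to the Fibonacci case: I expect the clean two-term period for $n$ even and the longer four-term period for $n$ odd. In each case I would exhibit an explicit reduced translate/reciprocal of $\beta_n(N)$, expand it by the Euclidean algorithm until the original irrational reappears, and then undo the translation to read off the expansion of $\beta_n(N)$ itself.

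For $n$ even the candidate is $\eta=1/(\beta_n(N)-\tilde{L}_{n+1}(2N))$. Writing $\tilde{L}_{n+1}(2N)=2NL_n+L_{n-1}$ and rationalizing, the denominator collapses via $(NL_n+L_{n-1})^2-C_n(N)=L_{n-1}^2-L_nL_{n-2}=(-1)^{n-1}5=-5$, giving $\eta=\tfrac15(NL_n+L_{n-1}+\sqrt{C_n(N)})$. One then checks $\eta>1$ and $-1/\overline{\eta}=\tilde{L}_{n+1}(2N)-\overline{\beta}_n(N)=NL_n+L_{n-1}+\sqrt{C_n(N)}>1$, so $\eta$ is reduced. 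Running the Euclidean algorithm on $\eta$ and rewriting each residual with (\ref{luc1}), (\ref{id1}), (\ref{id2}), two extractions should return $\eta$ itself, closing a period of length~$2$ and yielding (\ref{luc7}).

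The hypothesis $N=5k+3$ is exactly what makes the partial quotients integral, and I would record its role at the outset. The factor $5$ produced by (\ref{luc5}) sits in the denominators, so it must cancel; since
$$\tilde{L}_{n+1}(N)=NL_n+L_{n-1}\equiv 3L_n+L_{n-1}=2L_n+L_{n+1}=5F_{n+1}\equiv 0 \pmod 5$$
by (\ref{id2}), the quantity $\tfrac25\tilde{L}_{n+1}(N)=2kL_n+2F_{n+1}$ is a genuine positive integer. This single congruence underlies every partial quotient in both parities and justifies the simplification $2kL_n+2F_{n+1}=\tfrac25\tilde{L}_{n+1}(N)$ appearing in the statements.

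For $n$ odd the identity (\ref{luc5}) gives $+5$, so subtracting $\tilde{L}_{n+1}(2N)$ alone no longer produces a reduced number; instead I would take $y_n(N)=1/(\beta_n(N)-\tilde{L}_{n+1}(2N)+1)$, shifting the integer part down by one exactly as in the even-$n$ Fibonacci argument. The delicate point, and the step I expect to be the main obstacle, is the nested Euclidean expansion here: one must show (by induction on $n$) that the residual $\tfrac15(5-NL_n-L_{n-1}+\sqrt{C_n(N)})$ lies strictly in $(0,1)$, so that the four partial quotients $1,\ \tfrac25\tilde{L}_{n+1}(N)-2,\ 1,\ 2\tilde{L}_{n+1}(N)-2$ are correctly isolated and the fourth extraction reproduces $y_n(N)$, closing a period of length~$4$ and giving (\ref{luc6}). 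Beyond the Lucas analogues (\ref{luc5})--(\ref{id2}), no new idea is needed; the bookkeeping is simply heavier because the period is twice as long.
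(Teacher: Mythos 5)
Your proposal follows essentially the same route as the paper: rationalize $1/(\beta_n(N)-\tilde{L}_{n+1}(2N))$ (shifted by $1$ when $n$ is odd) using $L_{n-1}^2-L_nL_{n-2}=(-1)^{n-1}5$, verify reducedness, and run the Euclidean algorithm until the quantity recurs, with the congruence $\tilde{L}_{n+1}(N)\equiv 0\pmod 5$ for $N=5k+3$ making the partial quotients $2kL_n+2F_{n+1}=\tfrac{2}{5}\tilde{L}_{n+1}(N)$ integral. You correctly identify the parity reversal relative to the Fibonacci case and the key interval estimate $0<\tfrac15\bigl(5-NL_n-L_{n-1}+\sqrt{C_n(N)}\bigr)<1$ needed to close the length-$4$ period for $n$ odd.
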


%######################################################################################################################################################
There are some purely periodic continued fractions with periods of arbitrary length which when multiplied by $L_{n}$, give periodic continued fractions with period of length $2$ or $4$ depending on the parity of $n$. We give some of these continued fractions below.

\begin{thm}\label{beta2}
Let $n\geq2$, $k\geq0$ and $$\mu_{n}(k):=L_{n}\cdot[\overline{2(5k+3),1^{(n-2)},2,1,2k,1,2,1^{(n-2)}}]$$ where the $1^{(n)}$ means that $1$ has been repeated $n$ times, and $L_{n}$ is the $n$th Lucas number. Then $\mu_{n}(k)$ is an algebraic integer, and for $n$ odd
\begin{equation}\label{luc6}
  \mu_{n}(k)=[\tilde{L}_{n+1}(10k+6)-1,\overline{1,\frac{2}{5}L_{n+1}(5k+3)-2,1,2\tilde{L}_{n+1}(5k+3)-2}]
\end{equation}
while for $n$ even it is given by
\begin{equation}\label{luc7}
  \mu_{n}(k)=[\tilde{L}_{n+1}(10k+6),\overline{\frac{2}{5}L_{n+1}(5k+3),2\tilde{L}_{n+1}(5k+3)}].
\end{equation}
\end{thm}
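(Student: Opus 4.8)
The plan is to mirror the proof of Theorem \ref{thm1}: compute the matrix of one full period via the correspondence (\ref{con2}), read off the quadratic satisfied by the purely periodic value, and match it against the defining quadratic (\ref{luc3}) of $\beta_n(N)$ with $N=5k+3$. Write $N=5k+3$ and set $y=[\overline{2N,1^{(n-2)},2,1,2k,1,2,1^{(n-2)}}]$, so that $\mu_n(k)=L_n\,y$. Abbreviating $A=\left(\begin{smallmatrix}1&1\\1&0\end{smallmatrix}\right)$ and $M_c=\left(\begin{smallmatrix}c&1\\1&0\end{smallmatrix}\right)$, one period corresponds to $M_{2N}\,C$, where $C=A^{n-2}M_2AM_{2k}AM_2A^{n-2}$.

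The first structural feature I would exploit is that the word $1^{(n-2)},2,1,2k,1,2,1^{(n-2)}$ is a palindrome, so $C$ is a product of symmetric matrices whose factor sequence reads the same in both directions; hence $C=C^{T}$, say $C=\left(\begin{smallmatrix}a&b\\b&d\end{smallmatrix}\right)$, and since $C$ is a product of an odd number ($2n+1$) of determinant-$(-1)$ factors, $\det C=-1$. I would then compute the central block directly,
$$M_2AM_{2k}AM_2=\begin{pmatrix}18k+12&6k+5\\6k+5&2k+2\end{pmatrix},$$
and conjugate it by $A^{n-2}=\left(\begin{smallmatrix}F_{n-1}&F_{n-2}\\F_{n-2}&F_{n-3}\end{smallmatrix}\right)$ to obtain $a,b,d$ as quadratic forms in consecutive Fibonacci numbers with coefficients linear in $k$. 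From $M_{2N}C$ the convergent matrix $\left(\begin{smallmatrix}p_k&p_{k-1}\\q_k&q_{k-1}\end{smallmatrix}\right)$ has $q_k=a,\ q_{k-1}=b,\ p_k=2Na+b,\ p_{k-1}=2Nb+d$, and the self-similarity $y=(yp_k+p_{k-1})/(yq_k+q_{k-1})$ shows that the positive number $y$ is the (necessarily unique positive) root of $a\,y^{2}-2Na\,y-(2Nb+d)=0$.

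To finish I would show this quadratic is proportional to $L_n y^2-2NL_n y-\tilde L_n(2N)$, the quadratic whose largest root is $\beta_n(N)/L_n$ (obtained by dividing (\ref{luc3}) by $L_n$ after the substitution $x=L_n y$). Since the ratio of the $y^2$- and $y$-coefficients already equals $a/L_n$, proportionality reduces to the single identity
$$a\,\tilde L_n(2N)=L_n\,(2Nb+d),\qquad N=5k+3.$$
Granting it, $y$ is also the positive root of the second quadratic, so $L_n y=\beta_n(N)$, i.e.\ $\mu_n(k)=\beta_n(5k+3)$; Theorem \ref{beta1} then yields the two displayed continued fractions after writing $2N=10k+6$, and $\mu_n(k)$ is an algebraic integer because $\beta_n(N)$ is a root of the monic integer quadratic (\ref{luc3}).

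The main obstacle is the displayed identity. In this matching the parity split of Theorem \ref{beta1} plays no role (it lives only in how one later unfolds $\beta_n$ into a continued fraction), so I expect a single verification valid for all $n\geq 2$; but the hypothesis $N=5k+3$ is essential. After substituting the closed forms for $a,b,d$ and converting Fibonacci data to Lucas data through $L_n=F_{n+1}+F_{n-1}$ and $L_n+2L_{n-1}=5F_n$, the discrepancy between the two sides is governed by $L_{n-1}^2-L_nL_{n-2}=(-1)^{n-1}5$, and it is precisely the factor $5$ encoded in $N=5k+3$ (equivalently the ``$2k$'' and ``$\tfrac{2}{5}$'' occurring in the periods) that absorbs it. I would establish the identity either by inserting Binet-type closed forms, or more cleanly by induction on $n$ using the Fibonacci recurrence together with (\ref{fid}) and (\ref{luc5}); the base case $n=2$, where $C$ equals the central block itself, already exhibits the common factor $a/L_2=6k+4$ and confirms the mechanism.
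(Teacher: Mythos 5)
Your proposal is correct and follows essentially the same route as the paper: both compute the period matrix $M_{2N}A^{n-2}M_2AM_{2k}AM_2A^{n-2}$ via the correspondence (\ref{con2}) (with the same central block $\left(\begin{smallmatrix}18k+12&6k+5\\6k+5&2k+2\end{smallmatrix}\right)$), read off the quadratic $ay^2-2Nay-(2Nb+d)$, show it is proportional to $L_ny^2-2NL_ny-\tilde L_n(2N)$, and then invoke Theorem \ref{beta1}. The only difference is organizational: the paper verifies proportionality by explicitly evaluating all four entries as Lucas-number expressions (finding the common factor $\tfrac{2}{5}\tilde L_{n+1}(N)$, which equals your $6k+4$ at $n=2$), whereas you reduce it to the single identity $a\,\tilde L_n(2N)=L_n(2Nb+d)$ and sketch its verification; that identity is indeed true, being equivalent to the paper's computed values $a=\tfrac{2}{5}L_n\tilde L_{n+1}(N)$ and $2Nb+d=\tfrac{2}{5}\tilde L_{n+1}(N)\tilde L_n(2N)$.
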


\begin{proof}
Fix $N=5k+3$ and let $$y=[\overline{2N,1^{(n-2)},2,1,2k,1,2,1^{(n-2)}}].$$
Then $$y= [2N,1^{(n-2)},2,1,2k,1,2,1^{(n-2)},y]=\frac{yp_{n}+p_{n-1}}{yq_{n}+q_{n-1}}$$
where $[2N,1^{(n-2)},2,1,2k,1,2,1^{(n-2)},y]=\frac{p_{n}}{q_{n}}$.
 By the correspondence (\ref{con2}),
 $$\resizebox{0.98\hsize}{!}{$\left(\begin{array}{cc}2N & 1 \\ 1 & 0 \\ \end{array}\right)\left(\begin{array}{cc}1 & 1 \\ 1 & 0 \\ \end{array}\right)^{n-2}\left(\begin{array}{cc}2 & 1 \\ 1 & 0 \\ \end{array}\right)\left(\begin{array}{cc}1 & 1 \\ 1 & 0 \\ \end{array}\right)\left(\begin{array}{cc}2k & 1 \\ 1 & 0 \\ \end{array}\right)\left(\begin{array}{cc}1 & 1 \\ 1 & 0 \\ \end{array}\right)\left(\begin{array}{cc}2 & 1 \\ 1 & 0 \\ \end{array}\right)\left(\begin{array}{cc}1 & 1 \\ 1 & 0 \\ \end{array}\right)^{(n-2)}$}$$
$$
$$
\begin{eqnarray*}
% \nonumber to remove numbering (before each equation)
    &=& \left(\begin{array}{cc}2N & 1 \\ 1 & 0 \\ \end{array}\right)\left(\begin{array}{cc}F_{n-1} & F_{n-2} \\ F_{n-2} & F_{n-3} \\ \end{array}\right)\left(\begin{array}{cc}6(3k+2) & 6k+5 \\ 6k+5 & 2k+2 \\ \end{array}\right)\left(\begin{array}{cc}F_{n-1} & F_{n-2} \\ F_{n-2} & F_{n-3} \end{array}\right) \\
   &=& \left(\begin{array}{cc} p_{n} &p_{n-1} \\ q_{n} & q_{n-1} \\ \end{array}\right)
\end{eqnarray*}
with $p_{n},\;p_{n-1},\; q_{n}$ and $q_{n-1}$ given by
\begin{eqnarray*}
% \nonumber to remove numbering (before each equation)
  p_{n}  &=& \frac{4}{5}N^{2}L_{n}^{2}+\frac{6}{5}NL_{n}L_{n-1}+\frac{1}{5}\left(L_{n}^{2}-L_{n}L_{n-1}+L_{n-1}^{2}\right), \\
  p_{n-1}  &=& \frac{4}{5}N^{2}L_{n}L_{n-1}+\frac{2}{5}N\left(L_{n}^{2}-L_{n}L_{n-1}\right)+\frac{2}{5}L_{n}L_{n-1}+(4k+2)L_{n-1}^{2}, \\
          &=& \frac{2}{5}(2N-1)L_{n-1}\tilde{L}_{n+1}(N)+\frac{2}{5}L_{n}\tilde{L}_{n+1}(N), \\
          &=& \frac{2}{5}\tilde{L}_{n+1}(N)\left(2NL_{n-1}-L_{n-1}+L_{n}\right), \\
          &=& \frac{2}{5}\tilde{L}_{n+1}(N)\tilde{L}_{n}(2N), \\
  q_{n}  &=& \frac{2}{5}NL_{n}^{2}+\frac{2}{5}L_{n}L_{n-1}, \\
         &=& \frac{2}{5}L_{n}\tilde{L}_{n+1}(N) \\
  q_{n-1}  &=& (2k+1)L_{n}L_{n-1}+\frac{1}{5}\left(L_{n}^{2}+L_{n-1}^{2}\right).
\end{eqnarray*}
In evaluating the matrix multiplication, we used the identities (\ref{luc1}), (\ref{id1}) and (\ref{id2}). Clearly, $y$ is the largest root of the quadratic $$y^{2}q_{n}+(q_{n-1}-p_{n})y-p_{n-1}.$$
Here, $$q_{n-1}-p_{n}=-\frac{4}{5}NL_{n}\tilde{L}_{n+1}(N)$$ and so the quadratic can also be written as
$$\frac{2}{5}\tilde{L}_{n+1}(N)\left(y^{2}L_{n}-2NL_{n}y-\tilde{L}_{n}(2N)\right)$$

The largest root of the quadratic is given by $$y=\frac{1}{L_{n}}\left(NL_{n}+\sqrt{N^{2}L_{n}^{2}+L_{n}\tilde{L}_{n}(2N)}\right).$$ Clearly, $y$ is an algebraic integer, and so is $\mu_{n}(k)=L_{n}y$. The continued fraction of $\mu_{n}(k)=L_{n}y$ follows from theorem \ref{beta1}.
\end{proof}

%5555555555555555555555555555555555555555555555555555555555555555555555555555555555555555555555555555555555555555555555555555555555555555555555555555555555

\section{Generalizations}\label{sec4}
%For $k\geq 1$ consider the Fibonacci polynomials $F_{k}(x)$ generated by  the rational function
%\begin{equation}\label{fibgen}
%  \frac{1}{1-xt-t^{2}}=\sum_{k=1}^{\infty}F_{k}(x)t^{k}.
%\end{equation}
%Our aim is to
We now examine the continued fraction of $G_{k}(N,x)$ defined by
\begin{equation}\label{conf1}
  G_{k}(N,x):=F_{k}(x)\cdot[\overline{2N,x^{(k-1)}}]
\end{equation}
where $x\geq1$, $x^{(k)}=x,x,\dots,x$ repeated $k$ times and $n$ is a non-zero positive integer.

Let $\tilde{G}_{k}(N,x)=[\overline{2N,x^{(k-1)}}]$ so that $$\tilde{G}_{k}(N,x)=[2N,x^{(k-1)},\tilde{G}_{k}(N,x)]=\frac{\tilde{G}_{k}(N,x)p_{k}(x)+p_{k-1}(x)}{\tilde{G}_{k}(N,x)q_{k}(x)+q_{k-1}(x)}$$ where $[2N,x^{(k-1)}]=\frac{p_{k}(x)}{q_{k}(x)}$.
It can easily be shown by induction on $k$ that %for all $k\geq 1$,
 $$\left(\begin{array}{cc}x & 1 \\ 1 & 0 \\ \end{array}\right)^{k}=\left(\begin{array}{cc} F_{k+1}(x) & F_{k}(x) \\ F_{k}(x) & F_{k-1}(x) \\ \end{array}\right),$$
from which we get
\begin{eqnarray*}
% \nonumber to remove numbering (before each equation)
   \left(\begin{array}{cc}2N & 1 \\ 1 & 0 \\ \end{array}\right)\left(\begin{array}{cc}x & 1 \\ 1 & 0 \\ \end{array}\right)^{k-1} &=& \left(\begin{array}{cc}2N & 1 \\ 1 & 0 \\ \end{array}\right)\left(\begin{array}{cc}F_{k}(x) & F_{k-1}(x) \\ F_{k-1}(x) & F_{k-2}(x) \\ \end{array}\right) \\
   &=& \left(\begin{array}{cc}2N F_{k}(x)+F_{k-1}(x) & 2NF_{k-1}(x)+F_{k-2}(x) \\ F_{k}(x) & F_{k-1}(x) \\ \end{array}\right).
\end{eqnarray*}
This implies that $$[2N,x^{(k-1)}]=\frac{2N F_{k}(x)+F_{k-1}(x)}{F_{k}(x)}$$ and
$$\tilde{G}_{k}(N,x)=\frac{\tilde{G}_{k}(N,x)\left( 2N F_{k}(x)+F_{k-1}(x) \right)+2NF_{k-1}(x)+F_{k-2}(x)}{\tilde{G}_{k}(N,x)F_{k}(x)+F_{k-1}(x)}.$$
 $\tilde{G}_{k}(N,x)$ is the largest root of the quadratic $$F_{k}(x)z^{2}-2NF_{k}(x)z-(2NF_{k-1}(x)+F_{k-2}(x))$$ given by $$\tilde{G}_{k}(N,x)=N+\sqrt{N^{2}+(2NF_{k-1}(x)+F_{k-2}(x))/F_{k}(x)}. $$ We can now write
  \begin{equation}\label{idf1}
   G_{k}(N,x)=NF_{k}(x)+\sqrt{N^{2}F_{k}^{2}(x)+F_{k}(x)(2NF_{k-1}(x)+F_{k-2}(x))}.
 \end{equation}

 As we will see below, the continued fraction of $G_{k}(N,x)$ depends on the parity of $k$ because of the identity
 \begin{equation}\label{idf2}
   F_{k-1}^{2}(x)-F_{k}(x)F_{k-2}(x)=(-1)^{k}.
 \end{equation}
To simplify the notation, let $$\beta_{k}(N,x)=N^{2}F_{k}^{2}(x)+F_{k}(x)(2NF_{k-1}(x)+F_{k-2}(x))$$ so that $G_{k}(N,x)=NF_{k}(x)+\sqrt{\beta_{k}(N,x)}.$ We first examine the continued fraction of $G_{k}(N,x)$ for $k$ odd.
For all $k\geq1$ and $x\geq1$, we have by  (\ref{idf2}) that $\beta_{k}(N,x)=(NF_{k}(x)+F_{k-1}(x))^{2}+1$ so that $$G_{k}(N,x)-(2NF_{k}(x)+F_{k-1}(x))=\sqrt{\beta_{k}(N,x)}-(NF_{k}(x)+F_{k-1}(x))>0.$$
%as a consequence of (\ref{idf2}) and
\begin{eqnarray*}
% \nonumber to remove numbering (before each equation)
 1/(G_{k}(N,x)-(2NF_{k}(x)+F_{k-1}(x))) &=& \frac{NF_{k}(x)+F_{k-1}(x)+\sqrt{\beta_{k}(N,x)}}{-(F_{k-1}^{2}(x)-F_{k}(x)F_{k-2}(x))}\\
  &=& NF_{k}(x)+F_{k-1}(x)+\sqrt{\beta_{k}(N,x)}>0.
\end{eqnarray*}
%where we have used (\ref{idf2}).
Call this $G_{k}^{(1)}(N,x).$ $$G_{k}^{(1)}(N,x)-(2NF_{k}(x)+2F_{k-1}(x))=\sqrt{\beta_{k}(N,x)}-(NF_{k}(x)+F_{k-1}(x))$$ and from the above calculations,
$$1/(G_{k}^{(1)}(N,x)-(2NF_{k}(x)+2F_{k-1}(x)))=G_{k}^{(1)}(N,x).$$
Hence the continued fraction of $G_{k}^{(1)}(N,x)$ is purely periodic with period of length 1 given by $\{2NF_{k}(x)+2F_{k-1}(x)\}$. For all $k>1$ odd, the continued fraction of $G_{k}(N,x)$ is given by,
\begin{equation}\label{conf2}
  G_{k}(N,x)=[2NF_{k}(x)+F_{k-1}(x),\;\overline{2NF_{k}(x)+2F_{k-1}(x)}].
\end{equation}

\noindent Now for the case of $k>1$ even,  $\beta_{k}(N,x)=(NF_{k}(x)+F_{k-1}(x))^{2}-1$  by  (\ref{idf2}), so that
$$G_{k}(N,x)-(2NF_{k}(x)+F_{k-1}(x)-1)=\sqrt{\beta_{k}(N,x)}-(NF_{k}(x)+F_{k-1}(x))+1>0.$$
Call this $G_{k}^{(2)}(N,x).$
\begin{eqnarray*}
% \nonumber to remove numbering (before each equation)
 1/G_{k}^{(2)}(N,x) &=& 1+\frac{NF_{k}(x)+F_{k-1}(x)-\sqrt{\beta_{k}(N,x)}}{G_{k}^{(2)}(N,x)} \\
    &=& 1+\frac{F_{k-1}^{2}(x)-F_{k}(x)F_{k-2}(x)}{NF_{k}(x)+F_{k-1}(x)+\sqrt{\beta_{k}(N,x)}-(F_{k-1}^{2}(x)-F_{k}(x)F_{k-2}(x))}   \\
    &=& 1+\frac{1}{NF_{k}(x)+F_{k-1}(x)+\sqrt{\beta_{k}N,(x)}-1}   \\
    &=&  1+\frac{1}{2NF_{k}(x)+2F_{k-1}(x)-2+G_{k}^{(2)}(N,x)}
    %&=&  1+\frac{1}{F_{n+1}(2N)+F_{n-1}-2+1/y_{n}}.
\end{eqnarray*}
We can now see that $1/G_{k}^{(2)}(N,x)$  is purely periodic with a period of length 2 given by  $\{1,\;2NF_{k}(x)+2F_{k-1}(x)-2\}$. For all $k>1$ even, the continued fraction of $G_{k}(N,x)$ is given by,
\begin{equation}\label{conf2}
  G_{k}(N,x)=[2NF_{k}(x)+F_{k-1}(x)-1,\;\overline{1,\;2NF_{k}(x)+2F_{k-1}(x)-2}].
\end{equation}

We have proved that
\begin{thm}
For all $k\geq1$ and $x\geq1$, the product of the $k$th Fibonacci polynomial $F_{k}(x)$ with the periodic continued fraction $[\overline{2N,\;x^{(k-1)}}]$ gives the periodic continued fraction
$$  [2N F_{k}(x)+F_{k-1}(x),\overline{2N F_{k}(x)+2F_{k-1}(x)}] $$ for $k$ odd and
$$ [2N F_{k}(x)+F_{k-1}(x)-1,\overline{1,2N F_{k}(x)+2F_{k-1}(x)-2}] $$ for $k$ even.
\end{thm}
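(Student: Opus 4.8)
The plan is to reduce the statement to an explicit algebraic description of $G_{k}(N,x)=F_{k}(x)\cdot\tilde G_{k}(N,x)$, where $\tilde G_{k}(N,x)=[\overline{2N,x^{(k-1)}}]$, and then to read off each claimed expansion directly by exhibiting the purely periodic tail. Writing $F_{j}=F_{j}(x)$ throughout, I would first exploit pure periodicity, $\tilde G_{k}=[2N,x^{(k-1)},\tilde G_{k}]$, so that $\tilde G_{k}$ is a fixed point of the M\"obius map attached to the convergent matrix of $[2N,x^{(k-1)}]$. To compute that matrix I would establish, by induction on $k$, the Fibonacci-polynomial matrix power
$$\left(\begin{array}{cc} x & 1 \\ 1 & 0 \end{array}\right)^{k}=\left(\begin{array}{cc} F_{k+1} & F_{k} \\ F_{k} & F_{k-1} \end{array}\right),$$
multiply on the left by $\left(\begin{smallmatrix} 2N & 1 \\ 1 & 0 \end{smallmatrix}\right)$, and read off $p_{k},p_{k-1},q_{k},q_{k-1}$ through the correspondence (\ref{con2}). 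The fixed-point equation then forces $\tilde G_{k}$ to be the largest root of $F_{k}z^{2}-2NF_{k}z-(2NF_{k-1}+F_{k-2})$, whence $G_{k}(N,x)=NF_{k}+\sqrt{\beta_{k}(N,x)}$ with $\beta_{k}(N,x)=N^{2}F_{k}^{2}+F_{k}(2NF_{k-1}+F_{k-2})$.

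The decisive step is the simplification of the radicand. Using the polynomial identity (\ref{idf2}), $F_{k-1}^{2}-F_{k}F_{k-2}=(-1)^{k}$, I would rewrite
$$\beta_{k}(N,x)=(NF_{k}+F_{k-1})^{2}+(-1)^{k+1},$$
so that a single sign is what separates the two cases: for $k$ odd the radicand exceeds a perfect square by $1$, while for $k$ even it falls short by $1$. This $\pm1$ discrepancy is exactly what produces a period-$1$ tail in the first case and a period-$2$ tail in the second, and it is the whole reason the parity of $k$ enters.

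For $k$ odd I would verify the expansion by direct inspection. Subtracting the candidate leading term gives $G_{k}-(2NF_{k}+F_{k-1})=\sqrt{\beta_{k}}-(NF_{k}+F_{k-1})$, which lies in $(0,1)$ because $\beta_{k}=(NF_{k}+F_{k-1})^{2}+1$; hence $2NF_{k}+F_{k-1}$ is the integer part. Taking the reciprocal and rationalizing, the denominator collapses to $1$ by the identity above, returning $NF_{k}+F_{k-1}+\sqrt{\beta_{k}}$; subtracting $2NF_{k}+2F_{k-1}$ from this and inverting once more reproduces the same quantity, so the tail is purely periodic of length $1$ with partial quotient $2NF_{k}+2F_{k-1}$. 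This gives the stated expansion for $k$ odd.

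The main obstacle is the even case, where $\beta_{k}=(NF_{k}+F_{k-1})^{2}-1$ so the naive reciprocal no longer rationalizes to denominator $1$ and an extra partial quotient $1$ must be extracted. Here I would set $G_{k}^{(2)}=\sqrt{\beta_{k}}-(NF_{k}+F_{k-1})+1$ and compute $1/G_{k}^{(2)}$, applying (\ref{idf2}) to collapse the denominator to $2NF_{k}+2F_{k-1}-2+G_{k}^{(2)}$ and thereby exhibit $1/G_{k}^{(2)}$ as purely periodic with period $\{1,\,2NF_{k}+2F_{k-1}-2\}$. Beyond this bookkeeping, the only genuine care needed is to confirm that the proposed leading terms really are the integer parts, i.e.\ the bounds $0<\sqrt{\beta_{k}}-(NF_{k}+F_{k-1})<1$ for $k$ odd and its even-case analogue (which use $F_{j}(x)>0$ for $x\geq1$), and to check the reduced-irrational hypotheses that legitimize pure periodicity of the tail.
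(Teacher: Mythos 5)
Your proposal is correct and follows essentially the same route as the paper: the fixed-point/matrix computation identifying $G_{k}(N,x)=NF_{k}(x)+\sqrt{\beta_{k}(N,x)}$, the reduction of the radicand to $(NF_{k}(x)+F_{k-1}(x))^{2}+(-1)^{k+1}$ via the identity $F_{k-1}^{2}(x)-F_{k}(x)F_{k-2}(x)=(-1)^{k}$, and the two parity cases handled by the same period-$1$ and period-$2$ tail computations (including the auxiliary quantity you call $G_{k}^{(2)}$). The only difference is that you flag the integer-part bounds and the reduced-irrational hypothesis explicitly, which the paper largely leaves implicit.
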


If we set $x=1$ we get the result of theorem \ref{thm1}.

%5555555555555555555555555555555555555555555555555555555555555555555555555555555555555555555555555555555555555555555555555555555555555555555555555555555555

\section{Polynomials from convergents of $[\overline{N,x^{(k)}}]$}\label{sec5}
In this section, we show how polynomials arising from the convergents of $[\overline{N,x^{(k)}}]$ are related to Chebyshev and Fibonacci polynomials. We describe the polynomials for $k=1,2$ and $3$ and show how the roots of these polynomials are distributed.

First, for a fixed $k=1$, let $p_{n}(N,x)/q_{n}(N,x)$ be the convergents of $[\overline{N,x}]$. When $n\equiv(0\mod 2)$ and $n\equiv(1\mod 2)$, $q_{2n}(N,x)$ and $q_{2n+1}(N,x)$ are respectively generated by the rational functions
\begin{equation}\label{gen1}
  \frac{1-t}{1-(Nx+2)t+t^{2}}\;\;\;\text{ and }\;\;\;\frac{x}{1-(Nx+2)t+t^{2}}.
\end{equation}
Comparing these generating functions to the generating function of Chebyshev polynomials of the second kind below
\begin{equation}\label{gen1.1}
  \frac{1}{1-2xt+t^{2}}=\sum_{n=0}^{\infty}U_{n}(x)t^{n},
\end{equation}
we have $$q_{2n}(N,x)=U_{n}(N/2x+1)-U_{n-1}(N/2x+1)\;\;\;\;\text{ and }\;\;\;\;q_{2n+1}(N,x)=xU_{n}(N/2x+1).$$
To determine the roots of $q_{k}(x)$, first note that $U_{n}(x)$ has all its roots in the interval $(-1,1)$ given by $x_{k}=\cos\left(\frac{k}{n+1}\pi\right)$, see for example \cite[section 2.2]{Mason}. It follows that $q_{2n+1}(N,x)$ has real roots in the interval $(-4/N,0]$ and are given by $x_{k}=\frac{2}{N}\left(\cos\left(\frac{k}{n+1}\pi\right)-1\right).$ It can also be shown that $q_{2n}(N,x)$ has $n-1$ real roots in the interval $(-4/N,0]$ and one real root outside this interval.

Now for the case when $k=2$, let $p_{n}(N,x)/q_{n}(N,x)$ be the convergents of $[\overline{N,x,x}]$. When $n\equiv(0\mod 3)$ and $n\equiv(1\mod 3)$, $q_{3n}(N,x)$ and $q_{3n+1}(N,x)$ are respectively generated by the rational functions
\begin{equation}\label{gen1}
  \frac{x+t}{1-(Nx^{2}+2x+N)t-t^{2}}\;\;\;\text{ and }\;\;\;\frac{x}{1-((Nx^{2}+2x+N))t-t^{2}}
\end{equation}
and their zeros seem to lie close to the hyperbola $y^{2}-x^{2}=1-1/N^{2}$. %$q_{3n+2}(N,x)$ are generated by $$\frac{x^{2}+1}{1-(Nx^{2}+2x+N)t-t^{2}}$$

%Comparing these generating functions to the generating function of Chebyshev polynomials of the second kind below
%\begin{equation}\label{gen1.1}
%  \frac{1}{1-2xt+t^{2}}=\sum_{n=0}^{\infty}U_{n}(x)t^{n},
%\end{equation}
%
%When $n\equiv(0\mod3)$, $q_{n}(x)$ has a factor of $x^{2}+1$ for all $n\geq3$. By eliminating this  factor and making a change of variable $x\mapsto x-1/3$ we get (after clearing denominators) polynomials $Q_{n}(x)$ that are generated by
%\begin{equation}\label{gen2}
%  \frac{1}{1-(9x^{2}+8)t-9t^{2}}=\sum_{n=0}^{\infty}Q_{n}(x)t^{n}.
%\end{equation}
%\begin{thm}
%Let $Q_{n}(x)$ be as defined in equation (\ref{gen2}). Then for all $n\geq 1$, all the zeros $Q_{n}(x)$ lie on the hyperbola $$y^{2}-x^{2}=\frac{8}{9}.$$
%\end{thm}
%This is just a special case of theorem \ref{hyp1} below.
%

Let $p_{n}(N,x)/q_{n}(N,x)$ be the convergents of $[\overline{N,x,x}]$. When $n\equiv(2\mod3)$, $q_{n}(N,x)$  are generated by the rational function
$$ \frac{1+x^{2}}{1-(Nx^{2}+2x+N)t-t^{2}}$$
and have a factor of $x^{2}+1$. By eliminating this  factor and making a change of variable $x\mapsto x-1/N$ we get (after clearing  denominators) polynomials $Q_{n}(N,x)$ that are generated by
\begin{equation}\label{gen3}
  \frac{1}{1-(N^{2}x^{2}+N^{2}-1)t-N^{2}t^{2}}=\sum_{n=0}^{\infty}Q_{n}(N,x)t^{n}.
\end{equation}
This shifting of the polynomial reduces the number of terms as can be seen in the generating function above. Comparing this generating function to that of the Fibonacci polynomials (\ref{fibgen}), we have
$$Q_{k}(N,x)=N^{k}F_{k}\left(Nx^{2}+N-1/N\right).$$

\begin{thm}\label{hyp1}
For any non-zero $k\in\R$, let $Q_{n}(N,x)$ be as defined in equation (\ref{gen3}). Then for all $n\geq 1$, all the zeros $Q_{n}(N,x)$ lie on the hyperbola $$H1:\;\;\;\;y^{2}-x^{2}=\frac{N^{2}-1}{N^{2}}.$$
\end{thm}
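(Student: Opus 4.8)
The plan is to deduce the hyperbolic locus directly from the factorization through Fibonacci polynomials established just before the theorem, namely $Q_{n}(N,x)=N^{n}F_{n}(Nx^{2}+N-1/N)$, together with the fact that the zeros of the Fibonacci polynomials are purely imaginary. A complex number $x_{0}$ is a zero of $Q_{n}(N,\cdot)$ exactly when $z:=Nx_{0}^{2}+N-1/N$ is a zero of $F_{n}$; so if every zero of $F_{n}$ is purely imaginary, then I can extract the hyperbola by separating the real and imaginary parts of this quadratic relation. Here a complex zero $x_{0}=u+iv$ is to be read as the point $(u,v)=(x,y)$ in the plane, which is how the statement intends the coordinates $x,y$ of $H1$.

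The substantive step is to show that the zeros of $F_{n}(z)$ are purely imaginary. I would do this by relating $F_{n}$ to the Chebyshev polynomial of the second kind already used in this section. From the Fibonacci recurrence $F_{n+1}(z)=zF_{n}(z)+F_{n-1}(z)$ and the Chebyshev recurrence $U_{n}(w)=2wU_{n-1}(w)-U_{n-2}(w)$, one checks by induction on $n$ that $F_{n}(z)=i^{\,n-1}U_{n-1}(-iz/2)$; the base cases are $F_{1}=1=U_{0}$ and $F_{2}(z)=z=iU_{1}(-iz/2)$, and the inductive step is a direct match of the two three-term recurrences after the substitution $w=-iz/2$. Since $U_{n-1}$ has all of its $n-1$ zeros real and equal to $\cos(j\pi/n)$ for $j=1,\dots,n-1$ (the fact invoked earlier in this section, cf.\ \cite[section 2.2]{Mason}), the zeros of $F_{n}(z)$ are precisely $z=2i\cos(j\pi/n)$, hence all purely imaginary.

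With this in hand the conclusion is a one-line computation. If $x_{0}=u+iv$ is a zero of $Q_{n}(N,x)$ then $Nx_{0}^{2}+N-1/N=is$ for some real $s$, so $x_{0}^{2}=\tfrac{1}{N^{2}}-1+\tfrac{is}{N}$. Writing $x_{0}^{2}=(u^{2}-v^{2})+2uvi$ and comparing real parts gives $u^{2}-v^{2}=\tfrac{1}{N^{2}}-1$, that is $v^{2}-u^{2}=\tfrac{N^{2}-1}{N^{2}}$, which is exactly the equation of $H1$ in the coordinates $(x,y)=(u,v)$; the imaginary part $2uv=s/N$ only records the position of the zero along the hyperbola and is not needed. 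I expect the only real obstacle to be the Chebyshev identity $F_{n}(z)=i^{\,n-1}U_{n-1}(-iz/2)$ and the attendant purely-imaginary root locus; once that is secured, the separation of real and imaginary parts is immediate and holds for every nonzero real $N$ (the cases $N=\pm1$ simply degenerating $H1$ into the lines $y=\pm x$).
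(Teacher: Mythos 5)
Your proposal is correct, and it rests on the same key fact as the paper's proof --- that the zeros of the Fibonacci polynomials are purely imaginary, namely $z=2i\cos(j\pi/n)$ --- combined with the factorization $Q_{n}(N,x)=N^{n}F_{n}(Nx^{2}+N-1/N)$. Where you differ is in how the hyperbola is extracted, and in provenance: the paper simply cites Bicknell and Hoggatt for the imaginary root locus, whereas you derive it from the identity $F_{n}(z)=i^{\,n-1}U_{n-1}(-iz/2)$ (which checks out by induction and is consistent with the Chebyshev facts already used in this section), making the argument self-contained. More substantively, the paper parametrizes the candidate roots as $z=\frac{\sqrt{N^{2}-1}}{N}(\sinh\phi+i\cosh\phi)$, computes $Nz^{2}+N-1/N=(N-1/N)\,i\sinh 2\phi$, and solves explicitly for $\phi_{j}$, which yields closed-form expressions for all $2(n-1)$ roots and in particular exhibits them on $H1$ by construction; your route skips the parametrization entirely and just reads off $\operatorname{Re}(x_{0}^{2})=u^{2}-v^{2}=\tfrac{1}{N^{2}}-1$ from $Nx_{0}^{2}+N-1/N=is$. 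Your version is shorter and avoids the (implicit) need to argue that the hyperbolic parametrization accounts for every root of the degree-$2(n-1)$ polynomial $Q_{n}$; the paper's version buys explicit root formulas \eqref{exp22} at the cost of that extra bookkeeping. Both handle the degenerate cases $N=\pm1$ the same way (the hyperbola collapses to the lines $y=\pm x$), and your observation that the imaginary part $2uv=s/N$ merely locates the root along the hyperbola is exactly the content the paper encodes in its formula for $\phi_{j}$.
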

\begin{proof}
Bicknell  and Hoggatt proved in \cite{Bic} that if $F_{n}(x)=0$ then $x=2i\cos(j\pi/n)$ for $j=1,2,\dots,n-1$. Let $$z=\frac{\sqrt{N^{2}-1}}{N}(\sinh\phi+i\cosh\phi)$$
where $0<N$. Then
\begin{eqnarray*}
% \nonumber to remove numbering (before each equation)
  Nz^{2}+N-1/N &=& (N-1/N)(2i\sinh\phi\cosh\phi-1)+N-1/N  \\
    &=&  (N-1/N)i\sinh2\phi.
\end{eqnarray*}
Now $Q_{n}(N,z)=0$ implies that $$\sinh2\phi=\frac{2N}{N^{2}-1}\cos\theta_{j}$$ where $\theta_{j}=j\pi/(n+1)$ for $j=1,2,\dots,n-1$. Using the identity
\begin{equation}\label{id33}
  \sinh^{-1}x = \log(x+\sqrt{x^{2}+1}) \;\;\;\;\;-\infty<x<\infty
\end{equation}
in which we consider the principal branch of the $\log$, we get
\begin{equation}\label{exp21}
  \phi_{j}=\frac{1}{2}\log\left|2N\cos\theta_{j}+\sqrt{4N^{2}\cos^{2}\theta_{j}+(N^{2}-1)^{2}}\right|-\frac{1}{2}\log|N^{2}-1|.
\end{equation}
The $2n$ zeros of $Q_{n}(N,z)$ are now given by
\begin{equation}\label{exp22}
  z_{j}=\pm\sqrt{(1-1/N^{2})}(\sinh\phi_{j}+i\cosh\phi_{j})
\end{equation}
for $j=1,2,\dots,n-1$ where $\phi_{j}$ is given by (\ref{exp21}). It is straightforward to check that all these points lie on $H1$.
\end{proof}

Numerical calculations suggest that the polynomials generated by the rational functions (\ref{gen3}) have their roots close to $H1$.
%\begin{rem}
%This theorem also follows from a theorem of Khang Tran but we have the advantage of a formula for the roots of the polynomials. A theorem of Khang Tran can also be used to show that the roots of $q_{3n}(N,x)$ and $q_{3n+1}(N,x)$ lie close to $H1$.
%\end{rem}

These polynomials arising from the convergents of $[\overline{N,x^{(k)}}]$ seem to get more complicated as $k$ gets larger. For an example, let $k=3$, and $p_{n}(N,x)/q_{n}(N,x)$ be the convergents of $[\overline{N,x,x,x}]$ then $q_{4n+3}(N,x)$ are generated by
$$\frac{x(x^{2}+2)}{1-(Nx^{3}+2x^{2}+2Nx+2)t+t^{2}}.$$

Now for $k=4$, $q_{5n+4}(N,x)$ are generated by $$\frac{x^{4}+3^{2}+1}{1-(Nx^{4}+2x^{3}+3Nx^{2}+4x+N)t-t^{2}}.$$

%For $k$ odd and $n\equiv -1\mod k$, $q_{n}(N,K)$ seem to have a factor $Q_{n}(N,x)$ which is a `specialized' Chebyshev polynomial. On the other hand, for $k$ even and $n\equiv -1\mod k$, $q_{n}(N,K)$ seem to have a factor $Q_{n}(N,x)$ which is a `specialized' Fibonacci polynomial.

In general, for a fixed $k$, let  $q_{n}(N,x)$ be the denominator of the convergents of $[\overline{N,x^{(k)}}]$ when $n\equiv -1\mod (k+1)$. To simplify the notation, call them $Q_{m}(x)$. Then $Q_{m}(x)$ are generated by
\begin{equation}\label{genx1}
  \frac{F_{k+1}(x)}{1-g_{k}(N,x)t-(-1)^{k}t^{2}}=\sum_{m=0}^{\infty}Q_{m}(N,x)t^{m},
\end{equation}
where $g_{k}(N,x)$ are generated by the rational function
\begin{equation}\label{genx2}
  \frac{N+2t}{1-xt-t^{2}}=\sum_{k=0}^{\infty}g_{k}(N,x)t^{k}.
\end{equation}
By (\ref{fibgen}), we get the explicit expression for $g_{k}(N,x)$ as $$g_{k}(N,x)=NF_{k}(x)+2F_{k-1}(x).$$

We can apply a theorem of Khang Tran, see \cite[Theorem 1]{Tran}, to determine the curve on which the roots of $Q_{m}(N,x)$ lie. As $k$ increases however, so does the difficulty in describing this curve. As an example, for $k=4$ the roots of $Q_{m}(3,x)/F_{5}(x)$ lie on the curve given by
$$3 x^4-18 x^2 y^2+3 y^4+2 x^3-6 x y^2+9 x^2-9 y^2+4 x+3=0.$$

The relationship to Fibonacci and Chebyshev polynomials of the second kind follows from the generating function (\ref{genx1}). For a fixed $k$ that is even, $$Q_{n}(N,x)=F_{k+1}(x)F_{n}(g_{k}(N,x)),$$ while for a fixed $k$ that is odd, $$Q_{n}(N,x)=F_{k+1}(x)U_{n}(g_{k}(N,x)).$$
%This connection to Chebyshev polynomials is most puzzling.

\section*{Conclusion}
In conclusion, we pose a question. Are there any other integer sequences $f_{n}$ such that $\tilde{f}_{n+1}(m):=mf_{n}+f_{n-1}$, and for which the continued fraction of $$\frac{1}{f_{n}}\left(Nf_{n}+\sqrt{N^{2}f_{n}^{2}+f_{n}\tilde{f}_{n}(2N)}\right)$$
is purely periodic with length of the period depending on $n$. And the continued fraction of $$Nf_{n}+\sqrt{N^{2}f_{n}^{2}+f_{n}\tilde{f}_{n}(2N)}$$ is periodic with a fixed period length independent of $n$?
%$\sfrac{3}{4}$

\section*{Acknowledgements}
I would like to thank Professor Kenneth Stolarsky for pointing me towards this problem and for his continued support, insightful comments and guidance in the writing of this paper.

%$$S(N,n)=Sup_{P(\alpha)=n}^{}\;P(N\alpha)$$
\end{document}